\documentclass[12pt]{amsart}
\usepackage{graphicx}
\usepackage{amssymb}
\usepackage{amsmath}
\usepackage{amscd}
\usepackage{upgreek}
\usepackage{mathrsfs}
\usepackage{stmaryrd}
\usepackage{longtable}
\usepackage{txfonts}
\usepackage[T1]{fontenc}
\usepackage{eucal}
\usepackage{titletoc}
\usepackage{wrapfig}
\usepackage{float}
\usepackage{xypic}
\usepackage{dsfont}
\usepackage{xcolor}
\usepackage{color}
\usepackage[colorlinks,linkcolor=blue,anchorcolor=blue,
citecolor=blue]{hyperref}
\usepackage{hyperref}
\usepackage[hmarginratio=1:1,top=32mm,columnsep=20pt]{geometry}
	
\allowdisplaybreaks	
\usepackage{mathptmx}
\DeclareMathAlphabet{\mathcal}{OMS}{cmsy}{m}{n}
\DeclareSymbolFont{largesymbols}{OMX}{cmex}{m}{n}

\newtheorem{theorem}{Theorem}[section]
\newtheorem{prop}[theorem]{\rm \textsc{Proposition}}

\newtheorem{coro}[theorem]{\rm \textsc{Corollary}}

\newtheorem{thm}[theorem]{\it \textsc{Theorem}}

\newtheorem{rem}[theorem]{\rm \textsc{Remark}}
\newtheorem{exam}[theorem]{\rm\textsc{Example}}

\newcommand{\N}{\mathbb{N}} % the set of nonnegative integers

\newcommand{\F}{\mathbb{F}}

\newcommand{\ra}{\longrightarrow}

\newcommand{\GL}{{\rm GL}} % the general linear groups
\newcommand{\SL}{{\rm SL}}
\newcommand{\Sp}{{\rm Sp}}
\begin{document}
\setlength{\oddsidemargin}{0cm}
\setlength{\evensidemargin}{0cm}

%%%%%%%%%%%%%%%%%%%%%%%%%%%%%%%%%%% TITLE, AUTHOR AND ADDRESS
\title{\scshape On modular vector invariant fields}
\author{\scshape Ying Han}
\address{School of Mathematics and Statistics, Northeast Normal University, Changchun 130024, P.R. China}
\email{hany444@nenu.edu.cn}
\author{\scshape Runxuan Zhang}
\address{School of Mathematics and Statistics, Northeast Normal University, Changchun 130024, P.R. China.}
\email{zhangrx728@nenu.edu.cn}

%\date{\today}
\def\shorttitle{On modular vector invariant fields}

%%%%%%%%%%%%%%%%%%%%%%%%%%%%%%%%%%%ABSTRACT, MSC AND KEYWORDS
\begin{abstract}
Let $\F_q$ be a finite field of any characteristic and $\GL(n,\F_q)$ the general linear group over $\F_q$. Suppose $W$ denotes the standard representation of $\GL(n,\F_q)$ and $\GL(n,\F_q)$ acts diagonally on
the direct sum of $W$ and its dual space $W^*$. Let $G$ be any subgroup of $\GL(n,\F_q)$. Suppose the invariant field $\F_q(W)^G=\F_q(f_1,f_2,\dots,f_k)$, where $f_1,f_2,\dots,f_k\in\F_q[W]^G$ are homogeneous invariant polynomials.
We prove that there exist homogeneous polynomials $\ell_1,\ell_2,\dots,\ell_{n}$ in the invariant ring $\F_q[W\oplus W^*]^{G}$ such that the invariant field $\F_q(W\oplus W^*)^{G}$ is  generated by $\{f_1,f_2,\dots,f_k,\ell_1,\ell_2,\dots,\ell_{n}\}$ over $\F_q$.
\end{abstract}

\subjclass[2010]{13A50. }
\keywords{Modular invariants; vector invariant fields; finite classical groups. %\\ ~~~$^{*}$Corresponding author. 
}

\maketitle
\baselineskip=18pt

%%%%%%%%%%%%%%%%%%%%%%%%%%%%%%%%%%%%%CONTENTS
%\textcolor{blue}{\tableofcontents{}}

\dottedcontents{section}[1.16cm]{}{1.8em}{5pt}
\dottedcontents{subsection}[2.00cm]{}{2.7em}{5pt}
\dottedcontents{subsubsection}[2.86cm]{}{3.4em}{5pt}

\section{\scshape Introduction}

\setcounter{equation}{0}
\renewcommand{\theequation}
{1.\arabic{equation}}
\setcounter{theorem}{0}
\renewcommand{\thetheorem}
{1.\arabic{theorem}}

Let $\F$ be any field and $V$  a finite-dimensional vector space over $\F$.
Any finite subgroup $G\leq \GL(V)$ acts linearly on $V$ and the dual space $V^*$. We extend the action of $G$ on $V^*$ multiplicatively to the symmetric algebra $\F[V]$ on $V^*$ and the field of fractions $\F(V)$ of $\F[V]$.
The invariant ring $\F[V]^G$ is the subring of $\F[V]$ consisting of all elements fixed by every $\sigma\in G$, which together with the fixed field $\F(V)^G$ are two main objects of the study in the invariant theory. It is well-known that $\F(V)^G$ is equal to the field of fractions of $\F[V]^G$; see \cite{Ben1993,CW2011,DK2015} for general references of invariant theory of finite groups.

Suppose $\dim(V)=m$ and $\F(V)^G$ is purely transcendental over $\F$, i.e., the Noether problem has a positive answer for $\F(V)^G$.  A natural question involved in $\F[V]^G$ and $\F(V)^G$ asks
\begin{equation}\tag{$\dag$}
\label{ques}
\textit{whether there exist } \ell_1,\ell_2,\dots,\ell_m\in \F[V]^G\textit{ such that }\F(V)^G=\F(\ell_1,\ell_2,\dots,\ell_m)?
\end{equation}
This question has been studied extensively; see for example \cite{CC2007,Kan2006,FF2016} for $p$-groups and \cite{Chu1997,TW2006,CT2018,CW2017,CW2018}
for finite classical groups.

Here we are interested  in the modular case for the question. We assume that $\F=\F_q$ is a finite field of any characteristic and $W$  an $n$-dimensional vector space over $\F_q$.
Let $G\leq\GL(W)$ be any subgroup.  We consider the diagonal action of $G$ on the direct sum  $V=W\oplus W^*$.
The purpose of this   note is to prove the following result, which answers the above-mentioned question (\ref{ques})
for some particular cases and generalizes a result due to Chen and Wehlau \cite[Proposition 5]{CW2017}.

\begin{thm}\label{mt} Let $W$ be an $n$-dimensional vector space over a finite field $\F_q$ and
 $G$  any subgroup of $\GL(W)$. Suppose $\F_q(W)^G=\F_q(f_1,f_2,\dots,f_k)$, where $f_1,f_2,\dots,f_k\in\F_q[W]^G$ are homogeneous.
Then there exists a set of homogeneous invariant polynomials $\{\ell_1,\ell_2,\dots,\ell_{n}\}\subseteq \F_q[W\oplus W^*]^{G}$ such that $\F_q(W\oplus W^*)^{G}$ is  generated by $\{f_1,f_2,\dots,f_k,\ell_1,\ell_2,\dots,\ell_{n}\}$ over $\F_q$.
\end{thm}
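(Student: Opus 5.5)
Set up coordinates first. Let $x_1,\dots,x_n$ be a basis of $W^*$, so $\F_q[W]=\F_q[x_1,\dots,x_n]$, and let $y_1,\dots,y_n$ be the dual coordinates on $W^*$. Then $\F_q[W\oplus W^*]=\F_q[x_1,\dots,x_n,y_1,\dots,y_n]$, and $G$ acts on the $y$'s contragrediently to the $x$'s. Two kinds of invariants are available for free. The natural pairing gives $\ell_1=\sum_i x_iy_i$. More generally, the Frobenius twists
\[
\ell_{j+1}=\sum_{i=1}^n x_i^{q^j}y_i,\qquad j=0,1,\dots,n-1,
\]
are $G$-invariant. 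Indeed, if $\sigma$ acts on the column $(x_i)$ by a matrix $A$ and on $(y_i)$ by $(A^{-1})^T$, then since $A$ has entries in $\F_q$ we get $\sigma(x_i^{q^j})=\sum_k a_{ik}x_k^{q^j}$. Hence $\sum_i x_i^{q^j}y_i$ is the pairing of $x^{[q^j]}$ with $y$ and is fixed. These $\ell$'s are homogeneous, and they are the candidates I would take, following the approach of Chen--Wehlau.

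The key step is to show $\F_q(W\oplus W^*)^G=L$, where $L=\F_q(f_1,\dots,f_k,\ell_1,\dots,\ell_n)$. Put $K=\F_q(W)^G=\F_q(f_1,\dots,f_k)\subseteq L$. The $\ell$'s satisfy the linear system $M y=\ell$, with $M=(x_i^{q^{j}})_{j,i}$ the Moore matrix. Its determinant is the Dickson-type product of all nonzero linear forms up to scalar, so it is nonzero. Therefore $y_1,\dots,y_n\in\F_q(x_1,\dots,x_n)(\ell_1,\dots,\ell_n)$, and
\[
\F_q(W\oplus W^*)=\F_q(W)(\ell_1,\dots,\ell_n).
\]
Moreover $\ell_1,\dots,\ell_n$ are algebraically independent over $\F_q(W)$, because the transcendence degree count is $2n$.

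Now use Galois theory. We may assume $G$ acts faithfully, which holds since $G\le\GL(W)$. Then $\F_q(W)/K$ is Galois with group $G$. Since $\F_q(W\oplus W^*)=\F_q(W)(\ell)$ with the $\ell$'s independent transcendentals fixed by $G$, the extension $\F_q(W)(\ell)/K(\ell)$ is Galois with the same group $G$, acting only through $\F_q(W)$. So the fixed field is $\F_q(W\oplus W^*)^G=K(\ell_1,\dots,\ell_n)=L$. For this one needs the standard fact that adjoining independent transcendentals preserves the Galois group, i.e.\ that $\F_q(W)$ and $K(\ell)$ are linearly disjoint over $K$. This holds because purely transcendental extensions are regular.

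The main point to check carefully is that the $\ell_j$ really are invariant for every $G$. This rests on $q$-th powering being additive and fixing $\F_q$, and I expect it to be the only delicate step. Invertibility of the Moore matrix also has to be verified; it follows from the linear independence of $x_1,\dots,x_n$ over $\F_q$.
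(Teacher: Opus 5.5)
Your proof is correct, and it takes a genuinely different route from the paper; your $\ell_{j+1}$ are exactly the paper's $u_j$.

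The paper never analyzes $\F_q(W\oplus W^*)$ over $\F_q(W)$. Instead:
\begin{enumerate}
\item It cites Chen--Wehlau for $\F_q(W\oplus W^*)^{\GL(W)}=\F_q(c_{n0},\dots,c_{n,n-1},u_0,\dots,u_{n-1})$.
\item It notes that the Dickson invariants lie in $\F_q(f_1,\dots,f_k)$. Hence $E=\F_q(f_1,\dots,f_k,u_0,\dots,u_{n-1})$ sits between the $\GL(W)$-fixed and the $G$-fixed fields.
\item By the Galois correspondence, the group $N$ fixing $E$ is a subgroup of $\GL(W)$ whose elements fix $\F_q(W)^G$. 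Faithfulness on $\F_q(W)$ then puts them in $G$.
\end{enumerate}

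You argue directly instead. The Moore determinant shows $\F_q(W\oplus W^*)=\F_q(W)(u_0,\dots,u_{n-1})$, with the $u_j$ algebraically independent over $\F_q(W)$ and fixed by $G$. You then use that taking $G$-invariants commutes with adjoining $G$-fixed indeterminates.

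What each route buys:
\begin{itemize}
\item Yours is self-contained. It recovers the generation part of the cited Chen--Wehlau result as the special case $G=\GL(W)$, and it shows explicitly that $\F_q(W\oplus W^*)^G$ is purely transcendental of degree $n$ over $\F_q(W)^G$.
\item The paper's is shorter given the citation, and is a pure Galois-correspondence reduction to the $\GL(W)$ case.
\end{itemize}

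Your last step also needs less than regularity of purely transcendental extensions. The field $\F_q(W)(u)$ is the compositum of the finite extension $\F_q(W)/K$ with $K(u)$, so a $K$-basis of $\F_q(W)$ spans it over $K(u)$. This gives $[\F_q(W)(u):K(u)]\le|G|$. Artin's theorem for the faithful $G$-action, which fixes $K(u)$, then forces $K(u)$ to be the full fixed field.
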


Section \ref{sect2} contains some basic constructions and the proof of Theorem \ref{mt}.
In Section \ref{sect3}, we present two examples on finite classical groups.

\section{\scshape Proof of Theorem \ref{mt}} \label{sect2}

\setcounter{equation}{0}
\renewcommand{\theequation}
{2.\arabic{equation}}
\setcounter{theorem}{0}
\renewcommand{\thetheorem}
{2.\arabic{theorem}}

We first recall some constructions on modular invariants of one vector and one covectors appeared in \cite{BK2011} and \cite{CW2017}.  Let $\{y_{1},y_{2},\dots,y_{n}\}$ be a basis of $W$ and
$\{x_{1},x_{2},\dots,x_{n}\} $ a basis of $W^*$ dual to $\{y_{1},y_{2},\dots,y_{n}\}$. We may assume that
$\F_{q}[W]=\F_{q}[x_{1},x_{2},\dots,x_{n}]$, $\F_{q}[W^*]=\F_{q}[y_{1},y_{2},\dots,y_{n}]$, and
$\F_{q}[W\oplus W^{\ast}]=\F_{q}[x_{1},x_{2},\dots,x_{n},y_{1},y_{2}, \dots,y_{n}].$
We define an involution of $\F_q$-algebras $\ast :\F_{q}[W\oplus W^{\ast}]\ra\F_{q}[W\oplus W^{\ast}], f\mapsto f^*$ by
$$x_{i}\longmapsto y_{n+1-i}\textrm{ and }y_{i}\longmapsto x_{n+1-i}$$
and we define two homomorphisms of $\F_q$-algebras:
\begin{eqnarray*}
 F:\F_{q}[W\oplus W^{\ast}]\ra\F_{q}[W\oplus W^{\ast}]&\textrm{by}&  x_{i}\longmapsto x_{i}^{q},  y_{i} \longmapsto y_{i};\\
 F^*:\F_{q}[W\oplus W^{\ast}]\ra\F_{q}[W\oplus W^{\ast}]&\textrm{by}&  x_{i}\longmapsto x_{i},  y_{i} \longmapsto y_{i}^q
\end{eqnarray*}
for $i=1,2,\dots,n$. There exist some natural $\GL(W)$-invariants:
\begin{eqnarray*}
u_{0}&:=&\sum_{j=1}^{n} x_{j}y_{j} \\
u_{i}&:=& F^{i}(u_{0})=\sum_{j=1}^{n} x_{j}^{q^{i}}y_{j} \\
u_{-i}&:=&(F^{\ast})^{i}(u_{0})=\sum_{j=1}^{n}x_{j}y_{j}^{q^{i}}
\end{eqnarray*}
for all $ i \in \mathbb{Z}$. Note that $u_{-i}=u_{i}^{\ast}$ for all $i.$

\begin{proof}[Proof of Theorem \ref{mt}]

By \cite[Proposition 5]{CW2017}, we see that $\F_{q}(W \oplus W^{\ast})^{\GL(W)}$ is minimally generated by $\{c_{n0},c_{n1},\dots,c_{n,n-1},u_{0},u_{1},\dots,u_{n-1}\}$, where $c_{n0},c_{n1},\dots,c_{n,n-1}$ denote the Dickson invariants; see for example \cite[Section 2]{CW2017}. Note that $\F_q(c_{n0},c_{n1},\dots,c_{n,n-1})=\F_q(W)^{\GL(W)}\subseteq\F_q(W)^G=\F_q(f_1,f_2,\dots,f_k)$.
Thus every $c_{ni}(0\leq i\leq n-1)$ can be expressed rationally by $f_{1},f_{2},\dots,f_{k}$ over $\F_q$, it follows that
$$\F_{q}(W \oplus W^{\ast})^{\GL(W)}\subseteq \F_q(f_{1},f_{2},\dots,f_{k},u_{0},u_{1},\dots,u_{n-1})=:E.$$
Since $u_{0},u_{1},\dots,u_{n-1}$ are $\GL(W)$-invariant, they also belong to $\F_{q}(W \oplus W^{\ast})^{G}$. This implies
$E\subseteq \F_{q}(W \oplus W^{\ast})^{G}$, and thus we derive a tower of field extensions:
$$\F_{q}(W \oplus W^{\ast})^{\GL(W)}\subseteq E\subseteq \F_{q}(W \oplus W^{\ast})^{G}.$$
Let $N$ be the Galois group of $\F_{q}(W \oplus W^{\ast})$ over $E$. Then we have the following situation:
$$
\xymatrix{
\F_q(W\oplus W^*)^{\GL(W)} ~\ar@{--}[d] \ar@{^{(}->}[r] &~E~ \ar@{--}[d] \ar@{^{(}->}[r]& ~\F_q(W\oplus W^*)^{G}~ \ar@{--}[d] \ar@{^{(}->}[r]&  ~\F_q(W\oplus W^*) \ar@{--}[d]\\
\GL(W)~& ~N~\ar@{_{(}->}[l]  &~G~ \ar@{_{(}->}[l] & ~1 \ar@{_{(}->}[l]
}
$$
We \textit{claim} that $E=\F_q(W\oplus W^*)^{G}$. By Galois theory, $E=\F_q(W\oplus W^*)^{G}$ if and only if $G=N$. As $G\subseteq N$, it suffices to show that $N\subseteq G$.
Recall that $G$ is the Galois group of $\F_{q}(W)$ over $\F_{q}(W)^{G}=\F_q(f_1,f_2,\dots,f_k)$.
For any $\sigma \in N\leq \GL(W)$, since it fixes every element of $E$, it also
fixes every element of $\F_{q}(W)^{G}$. Thus $\sigma\in G$ and the claim holds.
This implies that $\F_q(W\oplus W^*)^{G}$ is  generated by $\{f_{1},f_{2},\dots,f_{k},u_{0},u_{1},\dots,u_{n-1}\}$ over $\F_q$.
\end{proof}

\section{\scshape Examples} \label{sect3}

\setcounter{equation}{0}
\renewcommand{\theequation}
{3.\arabic{equation}}
\setcounter{theorem}{0}
\renewcommand{\thetheorem}
{3.\arabic{theorem}}

In this section we present two examples on finite classical groups.

\begin{exam}{\rm
It is well-known that $\F_q(W)^{\SL(W)}=\F_q(d_{nn},c_{n1},\dots,c_{n,n-1})$ is purely transcendental over $\F_q$, where $d_{nn}^{q-1}=c_{n0}$. By Theorem \ref{mt}, we see that
$$\F_q(W\oplus W^*)^{\SL(W)}=\F_q(d_{nn},c_{n1},\dots,c_{n,n-1},u_{0},u_{1},\dots,u_{n-1}).$$
This result also has been obtained by \cite[Proposition 2.2]{CW2018}.
}\end{exam}

\begin{exam}{\rm
Suppose $L=[\begin{smallmatrix}
               0 & 1 \\
               -1 & 0 \\
             \end{smallmatrix}]
$ and $H=\textrm{diag}\{L,L,\dots,L\}$ is a $2n\times 2n$ alternate matrix over $\F_q$.
A matrix $T\in \GL(2n,\F_q)$ is said to be \textit{symplectic} with respect to $H$, if $T'HT=H$, where
$T'$ denotes the transpose of $T$.
The symplectic group $\Sp(2n,\F_q)$ is a subgroup of $\GL(2n,\F_q)$ consisting of all symplectic matrices.
Let $W$  denote the standard representation of $\GL(2n,\F_q)$. Choose a basis $\{x_1,x_2,\dots,x_{2n}\}$ for $W^*$ and make the convention that $X^k:=(x_1^{q^k},x_2^{q^k},\dots,x_{2n}^{q^k})$ for $k\in \N$. We define
$$P_k:=X^0\cdot H\cdot (X^k)',$$
where $(X^k)'$ denotes the transpose of the vector $X^k$.
By \cite[Corollary 3]{Chu1997}, we see that  $\F_q(W)^{\Sp(2n,\F_q)}$ is purely transcendental and minimally generated by $\{P_1,P_2,\dots,P_{2n}\}$ over $\F_q$. Applying Theorem \ref{mt}, we have
\begin{coro}
$\F_{q}(W \oplus W^{\ast})^{\Sp(2n,\F_q)}=\F_q(P_{1},P_{2},\dots,P_{2n},u_{0},u_{1},\dots,u_{2n-1})$.
\end{coro}

To gain another minimal generating set for $\F_{q}(W \oplus W^{\ast})^{\Sp(2n,\F_q)}$, we need to recall the notion of $\ast$-stable subgroups appeared in \cite{BK2011}. Let $J_{2n}=\left[\begin{smallmatrix}
                     0 & \ldots & 1 \\
                     \vdots & \ddots  & \vdots \\
                     1 & \ldots & 0 \\
                   \end{smallmatrix}\right]$ be a $2n\times 2n$-matrix over $\F_q$. Then
$\sigma\longmapsto \sigma^{\ast}:=J_{2n}\cdot (\sigma^{-1})'\cdot J_{2n}$ gives rise to an automorphism of $\GL(W)$.
 Note that $(\sigma\cdot f)^{\ast}=\sigma^{\ast} \cdot f^{\ast}$ for all $\sigma\in\GL(W)$
 and $f\in\F_q[W\oplus W^*]$. We say that a subgroup $G\leq \GL(W)$ is \textit{$\ast$-stable} if $\ast(G)\subseteq G$.

 \begin{prop}\label{prop3.4}
The symplectic group $\Sp(2n,\F_{q})$ is $\ast$-stable.
\end{prop}

\begin{proof}
For any $T\in\Sp(2n,\F_q)$, it suffices to show that $T^*\in \Sp(2n,\F_q)$, i.e., $(T^*)'\cdot H\cdot T^*=H$.
A direct computation shows that $H^{-1}=H'$ and $J_{2n}'\cdot H\cdot J_{2n}=H'$. Note that $J_{2n}'=J_{2n}$.
As $T'\cdot H\cdot T=H$, we have $(T'\cdot H \cdot T)^{-1}=T^{-1}\cdot H^{-1}\cdot (T')^{-1}=H^{-1}$. Thus
$$ T^{-1}=H^{-1}\cdot T'\cdot H=H'\cdot T'\cdot H.$$
Hence, $(T^{\ast})'\cdot H\cdot T^{\ast}=(J_{2n}(T^{-1})'J_{2n})'\cdot H\cdot J_{2n}(T^{-1})'J_{2n}=
J_{2n}T^{-1}J_{2n}\cdot H\cdot J_{2n}(T^{-1})'J_{2n}=J_{2n}T^{-1} \cdot H'\cdot  (T^{-1})'J_{2n}$.
Since $T^{-1} \cdot H'\cdot  (T^{-1})'=(T^{-1}\cdot H\cdot (T^{-1})')'=(H'T'H\cdot H\cdot H'TH)'=
(H'T'H\cdot TH)'=(H'\cdot H \cdot H)'=H'$, it follows that $(T^{\ast})'\cdot H\cdot T^{\ast}=J_{2n}\cdot H'\cdot J_{2n}=H$, as desired.
\end{proof}

From Proposition \ref{prop3.4}, together with the fact that $*$ induces an automorphism of $\F_q[W\oplus W^*]^{\Sp(2n,\F_{q})}$ (see \cite[page 99]{BK2011}), we obtain

\begin{coro}
$\F_{q}(W \oplus W^{\ast})^{\Sp(W)}=\F_q(P_{1}^*,P_{2}^*,\dots,P_{2n}^*,u_{0},u_{-1},\dots,u_{1-2n})$.
\end{coro}
}\end{exam}

\begin{rem}{\rm
Based on the results in \cite{Chu1997} and \cite{TW2006},
we might have similar conclusions for the orthogonal groups, the unitary groups and the pseudo-symplectic groups.
}\end{rem}

\section*{\scshape Acknowledgments}

The authors would like to thank Professor Zhongming Tang and Yin Chen for their helpful comments on an earlier draft of this paper.  This research was partially supported by NNSF of China (No. 11301061). The authors are also grateful to the referee for his/her helpful comments.

\end{document}